\documentclass[11pt]{amsart}
\usepackage{amsfonts,amssymb,amscd,amsmath,enumerate,verbatim}
\usepackage{amsmath,amsthm,amssymb}
\usepackage{ulem,ascmac} 
\usepackage[dvips]{graphicx}
\usepackage{fancybox}

\theoremstyle{plain}
\newtheorem{thm}{Theorem}[section]
\newtheorem{prop}[thm]{Proposition}
\newtheorem{cor}[thm]{Corollary}
\newtheorem{lem}[thm]{Lemma}

\newtheorem{quest}[thm]{Question}
\theoremstyle{definition}
\newtheorem{defn}[thm]{Definition}

\newtheorem{rem}[thm]{Remark}

\newcommand{\conv}{{\mathrm{conv}}}

\newcommand{\heit}{{\mathrm{ht}}}

\newcommand{\supp}{{\mathrm{supp}}}

\newcommand{\R}{{\mathbb{R}}}
\newcommand{\Z}{{\mathbb{Z}}}

\newcommand{\rleft}{\mathopen{}\mathclose\bgroup\left}
\newcommand{\rright}{\aftergroup\egroup\right}

\newcommand{\eb}{{\bf e}}

\textwidth=15cm \textheight=21cm \topmargin=0.5cm
\oddsidemargin=0.5cm \evensidemargin=0.5cm \pagestyle{plain}

\def\eb{{\bold e}}

  \makeatletter
    
    \@addtoreset{equation}{section}
  \makeatother


\title{Universal conditions on $h^*$-vectors of lattice simplices}

\author[A.\,Higashitani]{Akihiro Higashitani}
\address[A.\,Higashitani]{Department of Mathematics\\Kyoto Sangyo University\\ 603-8555, Kyoto\\Japan}
\curraddr{}
\email{ahigashi@cc.kyoto-su.ac.jp}
\thanks{The author is partially supported by JSPS Grant-in-Aid for Young Scientists (B) $\sharp$17K14177. }

\subjclass[2010]{Primary: 52B20; Secondary: 52B12}
\keywords{Lattice polytope, $h^*$-vector, universal inequality. }

\begin{document}

\begin{abstract}
In this paper, we will prove that given a lattice simplex with its $h^*$-polynomial $\sum_{i \geq 0}h_i^*t^i$, 
if $h_{k+1}^*=\cdots=h_{2k}^*=0$ holds, then there exists a lattice simplex of degree $k$ 
whose $h^*$-polynomial coincides with $\sum_{i=0}^k h_i^*t^i$. 
Moreover, we will present the examples showing that the condition $h_{k+1}^*=h_{k+2}^*=\cdots=h_{2k-1}^*=0$ is necessary. 
\end{abstract}

\maketitle

\section{Introduction}\label{sec:intro}


A \textit{lattice} polytope is a convex polytope all of its vertices belong to the standard lattice $\Z^d$. 
Let $P \subset \R^d$ be a lattice polytope $P$ of dimension $d$. Given $n \in \Z_{>0}$, 
we consider the number of lattice points $|nP \cap \Z^d|$ contained in the $n$-th dilation of $P$. 
Ehrhart~\cite{Ehr62} proved that $|nP \cap \Z^d|$ can be expressed by a polynomial in $n$ of degree $d$ for each $n$, denoted by $E_P(n)$. 
This polynomial $E_P(n)$ is called the \textit{Ehrhart polynomial} of $P$. 
Moreover, the generating function of $|nP \cap \Z^d|$ becomes the rational function which is of the form 
$$1+\sum_{n = 1}^\infty |nP \cap \Z^d| t^n =\frac{\sum_{i = 0}^dh_i^*t^i}{(1-t)^{d+1}},$$ 
where each $h_i^*$ is an integer. The sequence of integers $h^*(P)=(h_0^*,h_1^*,\ldots,h_d^*)$ is called the \textit{$h^*$-vector} (or \textit{$\delta$-vector}) of $P$, 
and the polynomial $h_P^*(t)=h_0^*+h_1^*t+\cdots+h_d^*t^d$ is called the \textit{$h^*$-polynomial} (or \textit{$\delta$-polynomial}) of $P$. 
We call the degree of the $h^*$-polynomial of $P$ the \textit{degree} of $P$, denoted by $\deg(P)$. 
Note that the degree of a lattice polytope is at most the dimension of polytope. This implies that 
lattice polytopes having degree at most $d$ are a kind of generalization of lattice polytopes of dimension $d$. 

On the $h^*$-vector $h^*(P)=(h_0^*,\ldots,h_d^*)$ of a lattice polytope $P$ of dimension $d$, the following facts are well know. 
\begin{itemize}
\item We have $\displaystyle E_P(n)=\sum_{i=0}^d h_i^*\binom{n+d-i}{d}$. 
\item Some of $h_i^*$ has a simple description in terms of $P$, e.g., $h_0^*=1$, $h_1^*=|P \cap \Z^d| - (d+1)$ and 
$h_i^*=| (d+1-i)P^\circ \cap \Z^d|$ for each $\deg(P) \leq i \leq d$, where $P^\circ$ denotes the relative interior of $P$. 
In particular, $h_d^*=|P^\circ \cap \Z^d|$. Hence, we have \begin{align}\label{ineq_1d}h_1^* \geq h_d^*.\end{align} 
\item The leading coefficient of $E_P(n)$ is equal to $\sum_{i=0}^d h_i^*/d!$ and this coincides with the relative volume of $P$ (\cite[Corollaries 3.20, 3.21]{BR15}). 
The sum $\sum_{i=0}^d h_i^*$ of $h^*$-vector is called the \textit{normalized volume} of $P$. 
\item Each $h_i^*$ is nonnegative (\cite{Sta80}). 
\item If $h_d^* >0$, then we have $h_i^* \geq h_1^*$ for each $1 \leq i \leq d-1$ (\cite{Hib94}). 
\end{itemize}
For more details on Ehrhart polynomials and $h^*$-vectors, consult e.g., \cite{BR15}.

\bigskip


One of the most important problems in Ehrhart theory is the characterization of the polynomials that are the Ehrhart polynomials of some lattice polytopes. 
Since the Ehrhart polynomial and the $h^*$-vector are equivalent, i.e., we can know another one once we know the one, 
we can rephrase this problem as follows: 
\begin{quest}[{cf. \cite[Question 1.1]{BH18}}]
Characterize the sequences $(h_0^*,h_1^*,\ldots,h_d^*)$ of nonnegative integers that are the $h^*$-vectors of some lattice polytope of dimension $d$. 
\end{quest}
It is easy to see that $(1,a)$ is always the $h^*$-vector of some lattice polytope of dimension $1$ for any nonnnegative integer $a$. 
The case of $d=2$ is highly non-trivial, but the following is known: 
\begin{thm}[{\cite{Sco76}}]\label{thm:Scott}
Let $P$ be a lattice polytope $P$ of dimension $2$ with its $h^*$-vector $h^*(P)=(1,h_1^*,h_2^*)$. Then one of the following conditions is satisfied: 
\begin{enumerate}
\item \label{scott:1} $h^*_2 = 0$;
\item \label{scott:2} $h_2^* \leq h^*_1 \leq 3h^*_2 + 3$;
\item \label{scott:3} $h^*_1 = 7$ and $h^*_2=1$.
\end{enumerate}
\end{thm}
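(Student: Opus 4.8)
The plan is to translate the statement about $(h_0^*,h_1^*,h_2^*)$ into the classical language of Pick's theorem and then prove the resulting inequality on boundary and interior lattice points. Writing $I=|P^\circ\cap\Z^2|$ for the number of interior lattice points and $B$ for the number of boundary lattice points of $P$, the facts recalled in the introduction give $h_2^*=I$ and $h_1^*=|P\cap\Z^2|-3=(I+B)-3$, so that the normalized volume $h_0^*+h_1^*+h_2^*=2\vol(P)$ reproduces Pick's formula $\vol(P)=I+B/2-1$. Under this dictionary the three conditions read as follows: $(\ref{scott:1})$ is $I=0$; the left inequality $h_2^*\le h_1^*$ in $(\ref{scott:2})$ is $B\ge 3$, which is the universal inequality \eqref{ineq_1d} and holds for every two-dimensional lattice polytope; the right inequality $h_1^*\le 3h_2^*+3$ is $B\le 2I+6$; and $(\ref{scott:3})$ is exactly $(I,B)=(1,9)$, i.e.\ $B=2I+7$, realized by the triangle $3\Delta_2=\conv\{\orig,3\ve_1,3\ve_2\}$. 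Thus the theorem is equivalent to the assertion that every lattice polygon with $I\ge 1$ satisfies $B\le 2I+6$, with the single exception $P\cong 3\Delta_2$, for which $B=2I+7=9$.

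To prove this inequality I would normalize by a unimodular transformation so that a direction of minimal lattice width is vertical, placing $P$ in the strip $\{0\le y\le h\}$, where $h=\width(P)$, and meeting both bounding lines; note $h\ge 2$ once $I\ge1$, since an interior point forces an integer strictly between $0$ and $h$. Slicing $P$ into the rows $R_j=P\cap\{y=j\}$ and letting $n_j=|R_j\cap\Z^2|$, every lattice point of an interior row $1\le j\le h-1$ lies in $\topint(P)$ except possibly the two endpoints of $R_j$, so $n_j\le i_j+2$, where $i_j$ counts the interior points on row $j$ and $I=\sum_{j=1}^{h-1}i_j$. Summing and using $B=\bigl(\sum_{j=0}^h n_j\bigr)-I$ yields the bound $B\le n_0+n_h+2(h-1)$, which is already sharp on the extremal triangle $3\Delta_2$ and on the family $\conv\{\orig,N\ve_1,2\ve_2\}$.

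The remaining and genuinely delicate step is to pass from $B\le n_0+n_h+2h-2$ to $B\le 2I+7$, i.e.\ to establish $n_0+n_h+2h-9\le 2I$. Here one must exploit the \emph{minimality} of the vertical width together with the concavity of the row-length function $j\mapsto \mathrm{length}(R_j)$: minimality forbids the top and bottom edges from being long relative to $h$ and to the interior rows, and this is precisely what controls $n_0+n_h$ against $I$. I expect this to be the main obstacle, since the constant is sharp and the inequality degenerates exactly at the extremal configurations; the bookkeeping near the two extreme rows, where there are no interior points to absorb the boundary lattice points, is where the ``$+7$'' originates.

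Finally, I would isolate the equality case. Tracking when every estimate above is tight forces $h$ small and the extreme rows maximal, which by a short direct enumeration of the polygons with $I=1$ (and of low width) pins down $3\Delta_2$ as the unique polygon with $B=2I+7$ and identifies the polygons attaining $B=2I+6$. Assembling the pieces: $I=0$ gives $(\ref{scott:1})$, the exceptional triangle gives $(\ref{scott:3})$, and every other polygon with $I\ge1$ satisfies $B\le 2I+6$, which, together with the universal bound $h_1^*\ge h_2^*$ from \eqref{ineq_1d}, gives $(\ref{scott:2})$.
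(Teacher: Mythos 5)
The paper gives no proof of this theorem at all---it is quoted verbatim from Scott's 1976 paper---so there is nothing to compare routes against; your proposal has to stand on its own, and it does not. What you do carry out is correct: the dictionary $h_2^*=I$, $h_1^*=(I+B)-3$ is right, the left inequality of \eqref{scott:2} is the trivial $B\geq 3$, the right one is $B\leq 2I+6$, and case \eqref{scott:3} is $(I,B)=(1,9)$, so the theorem is indeed equivalent to the classical statement that every lattice polygon with $I\geq 1$ satisfies $B\leq 2I+7$, with equality only for $3\Delta_2$. The slicing setup ($h=\width(P)\geq 2$ once $I\geq 1$, the row bound $n_j\leq i_j+2$, hence $B\leq n_0+n_h+2(h-1)$) is also sound.

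But the proof stops exactly where Scott's theorem begins. The step from $B\leq n_0+n_h+2(h-1)$ to $B\leq 2I+7$, i.e.\ the bound $n_0+n_h+2h-9\leq 2I$, is never established: you say one ``must exploit the minimality of the vertical width together with the concavity of the row-length function'' and that you ``expect this to be the main obstacle''---that is a description of the missing argument, not the argument. Nothing in the proposal bounds $n_0+n_h$ (the two rows with no interior points to absorb them) in terms of $I$, and nothing bounds $h$; this is precisely the content of Scott's proof, where width-minimality is used quantitatively (e.g.\ to force $h\leq 4$ when $I\geq 1$, up to the exceptional cases) and concavity of the row lengths converts edge rows into interior points. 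The equality analysis is likewise only gestured at, and as formulated it is logically insufficient: to obtain the trichotomy you must show that $B=2I+7$ forces $(I,B)=(1,9)$ for \emph{every} $I$, i.e.\ rule out equality for all $I\geq 2$, which a ``short direct enumeration of the polygons with $I=1$'' cannot do. So the proposal is a correct reduction plus a correct first estimate, with the heart of the theorem left unproved.
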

We can see that the converse of Theorem \ref{thm:Scott} is also true, i.e., 
we can construct a lattice polytope of dimension $2$ whose $h^*$-vector is equal to $(1,h_1^*,h_2^*)$ for each $(h_1^*,h_2^*)$ 
which satisfies one of the conditions \eqref{scott:1}, \eqref{scott:2} and \eqref{scott:3} in Theorem \ref{thm:Scott}. 

The case $d \geq 3$ is widely open, while some necessary inequalities have been recently conjectured in \cite[Conjecture 8.7]{Balletti} in the case of $d=3$.

As a generalization of Theorem \ref{thm:Scott}, the following is also known: 
\begin{thm}[{\cite[Theorem~2]{Tre10}}]\label{thm:genScott}
Let $P$ be a lattice polytope of degree at most two with its $h^*$-polynomial $h^*_P(t) = 1 + h^*_1 t + h^*_2 t^2$. 
Then one of the following conditions is satisfied: 
\begin{enumerate}
\item \label{genscott:1} $h^*_2 = 0$;
\item \label{genscott:2} $h^*_1 \leq 3h^*_2 + 3$;
\item \label{genscott:3} $h^*_1 = 7$ and $h^*_2=1$. 
\end{enumerate}
\end{thm}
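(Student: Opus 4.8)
The statement is a dimension-free generalization of Scott's theorem (Theorem~\ref{thm:Scott}), so the guiding idea is to reduce an arbitrary lattice polytope of degree at most two to a two-dimensional one carrying the same $h^*$-polynomial, and then to invoke Theorem~\ref{thm:Scott}. First I would dispose of the trivial regime: if $\deg(P) \le 1$ then $h^*_2 = 0$ and we land in case~\eqref{genscott:1}, so from now on assume $\deg(P) = 2$, which forces $h^*_2 \ge 1$. Next I would exploit the fact that passing to a lattice pyramid $\Pyr(P)$ leaves the $h^*$-polynomial unchanged; hence one may assume throughout that $P$ is not a lattice pyramid, and the entire problem becomes one of controlling the ``non-pyramidal'' polytopes of degree two.

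The key structural input is the theory of lattice polytopes of small degree. By the Cayley structure theorem for low-degree polytopes (Batyrev--Nill, in the sharp form of Haase--Nill--Payne), a lattice polytope of degree $s$ whose dimension exceeds a bound linear in $s$ is lattice-equivalent to a Cayley polytope. For $s = 2$ this means that once $\dim P$ is large enough, $P$ is lattice-equivalent to a Cayley sum $P_0 * \cdots * P_k = \conv\bigl(\bigcup_{i} (P_i \times \{\ve_i\})\bigr)$ of lattice polytopes $P_i$ lying in a common lattice of bounded dimension. This confines the genuinely high-dimensional behaviour to the Cayley case and leaves only finitely many small dimensions to be examined outside it.

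I would then treat the two resulting regimes separately. For the Cayley polytopes I would compute the $h^*$-polynomial in terms of the pieces $P_i$ and the mixed structure of their Cayley sum, and show that it coincides with the $h^*$-polynomial of a single two-dimensional lattice polytope (a suitable Minkowski combination of the $P_i$); Theorem~\ref{thm:Scott} applied to that polygon then returns exactly the trichotomy $h^*_2 = 0$, $h^*_1 \le 3h^*_2 + 3$, or $(h^*_1, h^*_2) = (7,1)$, i.e.\ cases~\eqref{genscott:1}--\eqref{genscott:3}. For the finitely many low-dimensional non-Cayley polytopes I would fall back on a direct classification, enumerating the possible $h^*$-vectors and verifying the inequality by hand.

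The main obstacle is the transfer step in the Cayley regime: one must prove that the $h^*$-polynomial of the Cayley construction really is that of an honest two-dimensional lattice polytope, so that Scott's inequalities descend unchanged. In particular the sporadic exceptional value $(h^*_1, h^*_2) = (7,1)$ of Theorem~\ref{thm:Scott} must be shown to survive the reduction rather than being absorbed into case~\eqref{genscott:2}; keeping precise track of the fibrewise and mixed contributions to the $h^*$-vector is where the delicate bookkeeping lies. Securing a sufficiently sharp dimension bound in the structure theorem, so that the residual base-case check is genuinely finite, is the other technical ingredient on which the argument depends.
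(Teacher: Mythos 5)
A preliminary remark: the paper contains no proof of Theorem~\ref{thm:genScott} --- it is quoted from \cite{Tre10} --- so your proposal can only be measured against Treutlein's argument and the later spanning-polytope proof of the more general Theorem~\ref{thm:BH} in \cite{BH18}. Measured that way, there is a genuine gap, and it sits exactly at the step you call ``delicate bookkeeping'': the claim that a Cayley polytope of degree two has the \emph{same} $h^*$-polynomial as some two-dimensional lattice polytope is false, not merely delicate. Every lattice polygon satisfies $h_1^*\geq h_2^*$ by \eqref{ineq_1d}, whereas Cayley polytopes of degree two need not. Concretely, the Reeve simplex $R_c=\conv(\{\orig,\ve_1,\ve_2,\ve_1+\ve_2+(c+1)\ve_3\})$ has lattice width one, hence is the Cayley sum of two lattice segments, and $h^*_{R_c}(t)=1+ct^2$; for $c\geq 1$ no polygon realizes this, since a polygon with $c\geq 1$ interior lattice points has $h_1^*\geq c>0$. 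Likewise, if $P$ is a polygon with $h_P^*(t)=1+h_1^*t+h_2^*t^2$, the prism $P\times[0,1]$ (the Cayley sum of two copies of $P$) has $h^*$-polynomial $1+(2h_1^*+2)t+(h_1^*+3h_2^*)t^2$, which violates \eqref{ineq_1d} whenever $h_1^*<3h_2^*-2$; for instance $P=\conv(\{(0,0),(6,0),(0,6)\})$ gives $h^*=(1,52,55)$. Taking iterated lattice pyramids (themselves Cayley sums with a point, and $h^*$-preserving) produces such examples in every dimension, i.e.\ precisely in the regime your Cayley reduction is supposed to cover. The paper itself signals this obstruction: the remark following Theorem~\ref{thm:genScott} notes that \eqref{ineq_1d} had to be dropped from the statement because the dimension may exceed two; equivalently, degree-two $h^*$-vectors form a strictly larger class than polygon $h^*$-vectors, so \emph{no} argument that realizes the given $h^*$-polynomial by a polygon can succeed, no matter how carefully the mixed contributions are tracked.

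What survives of your idea is an inequality transfer rather than an exact one: for a Cayley sum whose pieces are at most two-dimensional one can try to compare with the Minkowski sum $Q$ of the pieces, aiming at $h_1^*(\mathrm{Cayley})\leq h_1^*(Q)$ together with control of the quadratic coefficient, and then apply Theorem~\ref{thm:Scott} to the polygon $Q$; both examples above are consistent with this (their Minkowski sums are a parallelogram with $h^*=(1,c+1,c)$ and the triangle $\conv(\{(0,0),(12,0),(0,12)\})$ with $h^*=(1,88,55)$), and upper bounds of this flavor are the actual content of the Haase--Nill--Payne work you invoke. Proving the \emph{inequality} directly, instead of realizing the $h^*$-vector, is also how \cite{Tre10} and \cite{BH18} proceed. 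Two further problems with the outline: the dimension bound in the Cayley structure theorem is quadratic in the degree, not linear (harmless for degree two); and your residual case is not finite. Already in dimension three there are infinitely many pairwise inequivalent non-Cayley polytopes of degree two with unbounded $h^*$-vectors, e.g.\ the hollow polytopes $\conv(\{(0,0,0),(0,0,1),(2,0,0),(2,0,a),(0,2,0),(0,2,1)\})$ for even $a\geq 2$, which have lattice width two (so are not Cayley sums, a Cayley sum of three or more pieces in dimension three having degree at most one) and $h^*$-vector $(1,2a+5,2a+2)$. Hence the low-dimensional regime also requires a uniform argument, not an enumeration of finitely many cases.
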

It is proved in \cite[Proposition~1.10]{HT09} that the converse of Theorem \ref{thm:genScott} is also true. 
Note that the inequality $h_1^* \geq h_2^*$ of Theorem~\ref{thm:Scott} comes from \eqref{ineq_1d}, 
while it does not appear in Theorem~\ref{thm:genScott} since the dimension may be greater than two. 

Moreover, as a further generalization of Theorem~\ref{thm:genScott}, the following is proved: 
\begin{thm}[{\cite[Theorem~1.4]{BH18}}]\label{thm:BH}
Let $P$ be a lattice polytope with its $h^*$-polynomial $h_P^*(t) = \sum_{i \geq 0}h_i^*t^i$. Assume $h^*_3=0$. 
Then one of the following conditions is satisfied: 
\begin{enumerate}
\item \label{main:1} $h^*_2 = 0$;
\item \label{main:2} $h^*_1 \leq 3h^*_2 + 3$;
\item \label{main:3} $h^*_1 = 7$ and $h^*_2=1$.
\end{enumerate}
\end{thm}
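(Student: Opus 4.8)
The plan is to induct on nothing but to split cleanly on the degree $s=\deg(P)$, reducing each case either to Treutlein's classification (Theorem~\ref{thm:genScott}) or to a lower-bound inequality for $h^*$-vectors. The first observation is that the hypothesis $h_3^*=0$ already excludes $s=3$: since $h_s^*\ne 0$ by the very definition of the degree, a polytope of degree exactly $3$ would have $h_3^*\ne 0$, contrary to assumption. Hence only two regimes remain, namely $s\le 2$ and $s\ge 4$.

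If $s\le 2$, then $P$ is a lattice polytope of degree at most two with $h_P^*(t)=1+h_1^*t+h_2^*t^2$, so Theorem~\ref{thm:genScott} applies verbatim and produces one of the three stated alternatives. In this regime all of the genuine content is simply inherited from Treutlein's result, and nothing new is needed. The case $s\ge 4$ is where $h_3^*=0$ becomes a true \emph{internal} zero of the $h^*$-vector, and this is the step I expect to carry the weight. The idea is to use the lower-bound inequality $h_1^*\le h_i^*$, valid for $2\le i\le s-1$. Taking $i=3$, which is admissible precisely because $s\ge 4$ forces $3\le s-1$, gives $h_1^*\le h_3^*=0$; combined with the nonnegativity $h_1^*\ge 0$ this yields $h_1^*=0$. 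But then condition~\eqref{main:2}, i.e. $h_1^*\le 3h_2^*+3$, holds trivially, since $h_2^*\ge 0$ makes the right-hand side at least $3$. Thus in every case one of \eqref{main:1}, \eqref{main:2}, \eqref{main:3} is satisfied.

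The main obstacle is justifying the inequality $h_1^*\le h_3^*$ under an internal-zero hypothesis. The form of Hibi's inequality recorded in the introduction, $h_i^*\ge h_1^*$ for $1\le i\le d-1$, requires the top coefficient $h_d^*=|P^\circ\cap\Z^d|$ to be positive, and we cannot assume that $P$ has interior lattice points. The correct tool is therefore the \emph{degree-indexed} version of the lower-bound theorem (due to Stapledon), in which the range $2\le i\le s-1$ is governed by $\deg(P)$ rather than by $\dim(P)$; one must either cite this generalization or reprove it in the present restricted situation. Once that single inequality is in hand, the case analysis above is immediate, and the whole difficulty of the statement is concentrated either in Treutlein's classification (for $s\le 2$) or in this one structural inequality (for $s\ge 4$).
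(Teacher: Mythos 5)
Your splitting by degree starts correctly: $\deg(P)=3$ is impossible when $h_3^*=0$, and the regime $\deg(P)\le 2$ is literally Theorem \ref{thm:genScott}. The fatal gap is the tool you invoke for $\deg(P)\ge 4$. There is no ``degree-indexed lower bound theorem'' valid for arbitrary lattice polytopes: the termwise inequality $h_1^*\le h_i^*$ for $2\le i\le \deg(P)-1$ is \emph{false} in general (so this is not a citation issue that could be repaired), and so is the conclusion you draw from it. Stapledon's inequalities for arbitrary lattice polytopes bound partial sums of $h^*$-coefficients, not individual ones; the termwise bound is known only under extra hypotheses, namely $h_d^*>0$ (Hibi \cite{Hib94}) or $P$ \emph{spanning} (Hofscheier--Katth\"an--Nill \cite{HKN16}). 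Concretely, let $T=\conv\{(0,0),(3,0),(0,3)\}$, so that $h_T^*(t)=1+7t+t^2$, and let $\Delta_{1,4}$ be a lattice simplex with $h^*$-polynomial $1+t^4$ (as in Lemma \ref{construct}). By \cite[Lemma 1.3]{HT09} the join satisfies
\[
h^*_{T\star\Delta_{1,4}}(t)=(1+7t+t^2)(1+t^4)=1+7t+t^2+t^4+7t^5+t^6 .
\]
This polytope has $h_3^*=0$ and degree $6\ge 4$, yet $h_1^*=7$ and $h_2^*=1$: your claim that $\deg(P)\ge 4$ forces $h_1^*\le h_3^*=0$ fails. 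Note that this example satisfies neither \eqref{main:1} nor \eqref{main:2}, only \eqref{main:3}, whereas your argument would make \eqref{main:3} superfluous in the high-degree regime --- a clear sign the case analysis cannot be right. (Replacing $\Delta_{1,4}$ by $\Delta_{1,3}$ gives $1+7t+t^2+t^3+7t^4+t^5$, where $h_1^*=7>1=h_2^*$ although $i=2\le\deg(P)-1=4$; this refutes the general termwise inequality itself, independently of the hypothesis $h_3^*=0$.)

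For the record, this paper does not prove Theorem \ref{thm:BH}; it quotes it from \cite{BH18}, whose proof, as the introduction indicates, goes through spanning polytopes. For a spanning polytope the inequalities of \cite{HKN16} (no internal zeros, and $h_1^*\le h_i^*$ for $2\le i<\deg(P)$) do hold, so $h_3^*=0$ immediately forces degree at most $2$ and Treutlein finishes. The substance of \cite{BH18} is the non-spanning case: one passes to the affine lattice generated by $P\cap\Z^d$, with respect to which $P$ is spanning, and then controls how the coefficients $h_1^*,h_2^*,h_3^*$ change under this change of lattice. That reduction is exactly the ingredient missing from your outline, and the examples above show it cannot be bypassed by any termwise inequality valid for all lattice polytopes.
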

In \cite{BH18}, the inequalities \eqref{main:1}, \eqref{main:2} and \eqref{main:3} are called \textit{universal} 
because those are valid independently of both the dimension and the degree of lattice polytopes. 
A proof of Theorem \ref{thm:BH} is based on the idea of \textit{spanning polytopes} developed in \cite{HKN16}. 

The motivation to organize this paper is to find a new kind of universal inequalities or \textit{universal conditions}.


\begin{defn}
We say that a sequence $(a_0,a_1,\ldots,a_k)$ of nonnegative integers satisfies \textit{degree $k$ condition} 
if there exists a lattice polytope $P$ of degree at most $k$ such that $h_P^*(t)=\sum_{i=0}^k a_it^i$. 
\end{defn}

For any lattice polytope $P$ with its $h^*$-vector $(h_0^*,h_1^*,\ldots,h_d^*)$, 
we see that $(h_0^*,h_1^*)$ satisfies degree $1$ conditions. 
Moreover, Theorem \ref{thm:BH} says that the assumption ``$h_3^*=0$'' implies degree $2$ condition of $(h_0^*,h_1^*,h_2^*)$ 
of any $h^*$-vector $(h_0^*,h_1^*,\ldots,h_d^*)$ of lattice polytopes. 
Note that the assumption $h_3^* =0$ is necessary for degree $2$ condition. See \cite[Example 1.5]{BH18}. 

Hence, it is natural to think of the assumptions which imply degree $k$ condition for $(h_0^*,h_1^*,\ldots,h_k^*)$ 
of any $h^*$-vector $(h_0^*,h_1^*,\ldots,h_d^*)$ of lattice polytopes in the case $k \geq 3$. The main result of this paper is the following: 
\begin{thm}\label{mainthm}
Let $\Delta$ be a lattice simplex with its $h^*$-polynomial $h_\Delta^*(t)=\sum_{i \geq 0} h_i^*t^i$. 
Assume that $h_{k+1}^*=h_{k+2}^*=\cdots=h_{2k}^*=0$ for some $k \geq 3$. 
Then there exists a face $\Delta'$ of $\Delta$ such that $h^*_{\Delta'}(t)=\sum_{i=0}^k h_i^*t^i$. 
\end{thm}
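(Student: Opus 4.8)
\textit{Setup and reduction.} The plan is to pass to the standard cone model for the $h^*$-vector of a lattice simplex. Writing $\Delta=\conv(v_0,\dots,v_d)$ and $w_i=(v_i,1)\in\Z^{d+1}$, I would work with the half-open fundamental parallelepiped $\Pi=\{\sum_i\lambda_i w_i : 0\le\lambda_i<1\}$. The lattice points $G=\Pi\cap\Z^{d+1}$ form a finite abelian group under addition followed by reduction back into $\Pi$, and for $g=\sum_i\lambda_i w_i\in G$ I set $\heit(g)=\sum_i\lambda_i\in\Z_{\ge 0}$ and $\supp(g)=\{i:\lambda_i\ne 0\}$. The basic dictionary is $h_j^*=\#\{g\in G:\heit(g)=j\}$, and a face of $\Delta$ corresponds to a vertex subset $S$, whose own group is $H_S=\{g\in G:\supp(g)\subseteq S\}$, with $h^*$-vector recording the height distribution of $H_S$. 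Thus the theorem is equivalent to producing an $S$ with $H_S=L$, where $L:=\{g\in G:\heit(g)\le k\}$: matching the truncation $\sum_{i=0}^k h_i^*t^i$ forces $H_S$ to contain every class of height $\le k$ and no class of height $>k$.

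\textit{The subgroup step.} First I would show $L$ is a subgroup. Height is subadditive for the group law, since reducing each coordinate sum $\lambda_i+\mu_i$ back into $[0,1)$ only subtracts nonnegative carries; hence $\heit(g\oplus g')\le\heit(g)+\heit(g')\le 2k$ for $g,g'\in L$. The hypothesis $h_{k+1}^*=\cdots=h_{2k}^*=0$ means $G$ has no class whose height lies in the window $[k+1,2k]$, so $\heit(g\oplus g')\le k$ and $g\oplus g'\in L$. A nonempty finite subset of a finite group closed under the operation is a subgroup, so $L\le G$. The gap length is exactly calibrated here, which is consistent with the announced necessity of the shorter vanishing $h_{k+1}^*=\cdots=h_{2k-1}^*=0$: if the window only reached $2k-1$, a sum of two height-$k$ classes could have height exactly $2k$ and escape it, so $L$ need not be closed.

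\textit{The support-closedness step.} It then remains to show $L$ is support-closed, i.e. $H_{S(L)}=L$ for $S(L)=\bigcup_{g\in L}\supp(g)$; equivalently, the face on $S(L)$ has degree at most $k$. I would argue by contradiction: suppose some class supported on $S(L)$ has height $>k$ (hence $\ge 2k+1$ by the gap) and pick such a $g$ of minimal height $e$, with coordinates $\lambda_i$. For any $\ell\in L$, writing $\mu_i$ for the coordinates of $\ell$, one computes $\heit(g\ominus\ell)=e-\heit(\ell)+c_\ell$ with $c_\ell=\#\{i:\lambda_i<\mu_i\}\ge 0$, and $g\ominus\ell$ is again supported on $S(L)$. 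Since $\heit(\ell)\le k$, automatically $\heit(g\ominus\ell)\ge e-k\ge k+1$. Hence if I can exhibit a single $\ell\in L$ that strictly lowers the height, i.e. $\heit(\ell)>c_\ell$, then $g\ominus\ell$ lands either in the forbidden window $[k+1,2k]$, contradicting the gap, or in $[2k+1,e-1]$, giving a smaller counterexample and contradicting minimality of $e$.

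\textit{The main obstacle.} The hard part is exactly the production of such a descent class $\ell$. It suffices, for instance, to find a nonzero $\ell\in L$ lying coordinatewise below $g$ (then $c_\ell=0$), but in general one must use the weaker condition $\heit(\ell)>c_\ell$ and exploit that $\supp(g)\subseteq S(L)$ is covered by the supports of the height-$\le k$ classes. I expect this to be the delicate combinatorial heart of the argument: translating the covering of $S(L)$ by low supports into the existence of a low class that meets $g$ in enough coordinates without exceeding it too often. The restriction $k\ge 3$ should enter only to rule out the sporadic low-degree exceptions familiar from the theorems in the introduction; the cone–group mechanism itself is uniform in $k$, and the hypothesis that $\Delta$ is a \emph{simplex} is essential, since it is precisely what makes the fundamental-parallelepiped group model available.
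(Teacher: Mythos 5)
Your setup and your subgroup step are sound; in fact your shortcut for inverse-closure (a nonempty finite subset of a finite group closed under the operation is a subgroup) is a legitimate simplification of the paper's argument, which instead derives $\heit(-\alpha)\le k$ from a separate lemma about supports. But the second half of your proposal, the ``support-closedness step,'' is exactly where the content of the theorem lies, and you leave it open: your descent scheme needs, for a minimal counterexample $g$, the existence of some $\ell\in L$ with $\heit(\ell)>c_\ell$, and you say explicitly that you do not know how to produce it. That is a genuine gap, not a routine detail. Nothing in your text supplies a mechanism forcing such an $\ell$ to exist: the fact that $\supp(g)$ is covered by supports of low-height classes gives, by itself, no control on the quantities $c_\ell$, and it is not clear the descent can be carried out without some global input.

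The paper closes this step by a completely different, global argument, built from two ingredients you never use. First, the identity $|\alpha|=\heit(\alpha)+\heit(-\alpha)$, valid for every $\alpha\in\Lambda_\Delta$: combined with the gap hypothesis and with closure of $\Lambda'=\{\alpha:\heit(\alpha)\le k\}$ under inverses, it shows that every $\alpha\in\Lambda_\Delta\setminus\Lambda'$ satisfies both $\heit(\alpha)\ge 2k+1$ and $\heit(-\alpha)\ge 2k+1$, hence has support of size $|\alpha|\ge 4k+2$. Second, a quantitative bound imported from \cite{Hig18} (ultimately a Nill-type bound on the dimension of lattice simplices of a given degree): since every element of the subgroup $\Lambda'$ has height at most $k$, one gets $|\supp(\Lambda')|\le 4k-1$. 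Since $4k+2>4k-1$, no element outside $\Lambda'$ can be supported inside $\supp(\Lambda')$, so the face $\Delta'=\conv(\{v_i: i\in\supp(\Lambda')\})$ satisfies $\Lambda_{\Delta'}=\Lambda'$, which is the theorem. So the missing idea in your attempt is precisely this pair: the height-of-inverse identity making ``bad'' elements have huge support, and the external support bound making $\supp(\Lambda')$ small. Note also that your guess about the role of $k\ge 3$ (ruling out sporadic low-degree exceptions) does not reflect how the hypothesis actually enters; it feeds into the cited support bound, not into any case analysis of exceptional polytopes.
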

As an immediate corollary, we obtain the following: 
\begin{cor}\label{maincor}
Let $\Delta$ be a lattice simplex with $h_\Delta^*(t)=\sum_{i \geq 0} h_i^*t^i$. 
Assume that $h_{k+1}^*=h_{k+2}^*=\cdots=h_{2k}^*=0$ for some $k \geq 3$. Then $(h_0^*,h_1^*,\ldots,h_k^*)$ satisfies degree $k$ condition. 
\end{cor}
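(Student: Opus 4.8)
The plan is to work with the standard description of the $h^*$-polynomial of a lattice simplex via its fundamental parallelepiped. Writing $\Delta=\conv(v_0,\dots,v_d)$ and $w_i=(v_i,1)\in\Z^{d+1}$, I set $N=\sum_{i=0}^d\Z w_i$ and $G=\Z^{d+1}/N$. Each coset has a unique representative $\sum_i\theta_i w_i$ with $\theta_i\in[0,1)\cap\Q$, which identifies $G$ with a finite subgroup of $(\Q/\Z)^{d+1}$; I write $\heit(g)=\sum_i\theta_i(g)\in\Z_{\ge0}$ for its height and $\supp(g)=\{i:\theta_i(g)\neq0\}$. The two inputs I would start from are the well-known identity $h_\Delta^*(t)=\sum_{g\in G}t^{\heit(g)}$, and the fact that for any $S\subseteq\{0,\dots,d\}$ the face $\Delta_S=\conv(v_i:i\in S)$ satisfies $h^*_{\Delta_S}(t)=\sum_{g\in G,\ \supp(g)\subseteq S}t^{\heit(g)}$. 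The latter holds because the lattice points of the fundamental parallelepiped of $\Delta_S$ are exactly the elements of $G$ supported on $S$; here one checks $N\cap\lspan(w_i:i\in S)=\sum_{i\in S}\Z w_i$ using the linear independence of the $w_i$. My whole goal then reduces to producing a set $S$ with $\{g\in G:\supp(g)\subseteq S\}=\{g\in G:\heit(g)\le k\}$.

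First I would record two elementary identities. Complementation: since $\theta_i(-g)=1-\theta_i(g)$ on $\supp(g)$ and $\theta_i(-g)=0$ off it, one gets $\supp(-g)=\supp(g)$ and $\heit(g)+\heit(-g)=|\supp(g)|$. Subadditivity: reducing $\sum_i(\theta_i(g)+\theta_i(g'))w_i$ back into the parallelepiped only ever subtracts some $w_i$, so $\heit(g+g')\le\heit(g)+\heit(g')$. The hypothesis $h^*_{k+1}=\dots=h^*_{2k}=0$ says no element of $G$ has height in $[k+1,2k]$, so every height lies in $[0,k]\cup[2k+1,\infty)$. Combining this with subadditivity is the first key point: if $\heit(g),\heit(g')\le k$ then $\heit(g+g')\le2k$, hence $\heit(g+g')\le k$; together with complementation this shows that $H:=\{g\in G:\heit(g)\le k\}$ is a subgroup of $G$.

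The remaining and main step is to realize $H$ as the box group of a face, i.e.\ to show that $S:=\bigcup_{g\in H}\supp(g)$ satisfies $\{g:\supp(g)\subseteq S\}=H$. The inclusion $H\subseteq\{g:\supp(g)\subseteq S\}$ is immediate from the definition of $S$, so the content is the reverse inclusion: no $q$ with $\supp(q)\subseteq S$ can have height $\ge2k+1$. I would prove this by an averaging argument. For each $i\in S$ the projection $g\mapsto\theta_i(g)$ maps $H$ onto a cyclic subgroup of $\Q/\Z$ of some order $n_i\ge2$, and since the fibres of $\theta_i|_H$ all have size $|H|/n_i$ one computes $\sum_{g\in H}\theta_i(g)=\tfrac{|H|}{n_i}\sum_{a=0}^{n_i-1}\tfrac a{n_i}=\tfrac{|H|}{2}\bigl(1-\tfrac1{n_i}\bigr)\ge|H|/4$. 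Summing over $i\in S$ and using $\sum_{g\in H}\heit(g)\le k|H|$ yields $|S|\le4k$. On the other hand, if some $q$ satisfied $\supp(q)\subseteq S$ but $q\notin H$, then $\heit(q)\ge2k+1$, and since $-q\notin H$ as well we also get $\heit(-q)\ge2k+1$; complementation then forces $|\supp(q)|=\heit(q)+\heit(-q)\ge4k+2>4k\ge|S|$, contradicting $\supp(q)\subseteq S$. Hence $\{g:\supp(g)\subseteq S\}=H$, and $\Delta_S$ is the desired face, with $h^*_{\Delta_S}(t)=\sum_{g\in H}t^{\heit(g)}=\sum_{i=0}^k h_i^*t^i$.

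I expect the averaging step — extracting the bound $|S|\le4k$ and pairing it against the support lower bound $4k+2$ coming from complementation — to be the crux; the parallelepiped dictionary, the face identity, and the subgroup property of $H$ are comparatively routine. It would also be worth checking that the inequality $k\ge3$ is not actually needed in this argument and merely reflects that the cases $k\le2$ are already covered by Theorems~\ref{thm:genScott} and~\ref{thm:BH}.
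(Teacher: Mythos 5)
Your proof is correct, and its skeleton matches the paper's proof of Theorem \ref{mainthm} (from which Corollary \ref{maincor} follows immediately): pass to the box group, show that $H=\{g : \heit(g)\le k\}$ is a subgroup (closure under addition via subadditivity plus the forbidden height band $[k+1,2k]$), take the face spanned by the vertices indexed by $S=\supp(H)$, and exclude any extraneous $q$ with $\supp(q)\subseteq S$ by playing $|\supp(q)|=\heit(q)+\heit(-q)\ge 4k+2$ against an upper bound on $|S|$. Where you genuinely diverge is in the two technical ingredients. The paper obtains inverse-closure of $H$ from its Lemma \ref{lem1} (a stepping argument: $\heit(j\alpha)\le\heit((j-1)\alpha)+\heit(\alpha)$, so some multiple of $\alpha$ would otherwise land in the forbidden band), and it obtains the support bound $|\supp(H)|\le 4k-1$ by citing \cite{Hig18}. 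You instead prove the support bound from scratch by averaging: each projection $\theta_i|_H$ is a homomorphism onto a cyclic group of order $n_i\ge 2$ with equal fibres, so $\sum_{g\in H}\theta_i(g)=\frac{|H|}{2}\bigl(1-\frac{1}{n_i}\bigr)\ge\frac{|H|}{4}$, and comparing with $\sum_{g\in H}\heit(g)\le k|H|$ gives $|S|\le 4k$. This is weaker by one than the cited bound, but all that matters is $|S|<4k+2$, so it suffices --- and it makes the argument self-contained, replacing both the external citation and Lemma \ref{lem1}. One small repair is needed: ``complementation'' does not yield inverse-closure of $H$ as you assert, since $\heit(-g)=|\supp(g)|-\heit(g)$ is useless without an a priori bound on $|\supp(g)|$ (supplying such a bound is exactly the point of the paper's Lemma \ref{lem1}); the immediate fix is that $H$ is a nonempty subset of a finite group closed under addition, hence a subgroup, because $-g=(m-1)g$ where $m$ is the order of $g$. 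Your closing remark is also accurate: nothing in this argument uses $k\ge 3$; it works verbatim for every $k\ge 1$, the hypothesis $k\ge 3$ reflecting only that smaller $k$ is already covered by Theorems \ref{thm:genScott} and \ref{thm:BH}.
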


A brief organization of this paper is as follows. 
First, in Section 2, we prepare a material which we will use in the proof of the main results. 
Next, in Section 3, we give a proof of Theorem \ref{mainthm}. 
Finally, in Section 4, we show that the condition $h_{k+1}^*=h_{k+2}^*=\cdots=h_{2k-1}^*=0$ of Theorem \ref{mainthm} 
is necessary for degree $k$ condition. See Proposition \ref{prop:main}. 


\bigskip

\section{Preliminary}

First of all, we prepare some tools for the computation of $h^*$-vectors of lattice simplices.

Let $\Delta \subset \R^d$ be a lattice simplex of dimension $d$ and let ${\bf v}_1,\ldots,{\bf v}_{d+1} \in \Z^d$ be the vertices of $\Delta$. 
We use the usual notation $[d+1]:=\{1,\ldots,d+1\}$. We define 
\begin{align*}
\Lambda_\Delta:=\left\{(r_1,\ldots,r_{d+1}) : 0 \leq r_j < 1 \text{ for }\forall j \in [d+1], \; 
\sum_{i=1}^{d+1} r_i{\bf v}_i \in \Z^d, \; \sum_{i=1}^{d+1} r_i \in \Z \right\}. 
\end{align*}
We see that $\Lambda_\Delta$ is a finite abelian group by its addition 
$$\alpha+\beta=(\{\alpha_1+\beta_1\},\ldots,\{\alpha_{d+1}+\beta_{d+1}\}) \in \Lambda_\Delta$$ 
for $\alpha,\beta \in \Lambda_\Delta$, where $\{r\}=r-\lfloor r \rfloor$ for $r \in \R$. 
Note that ${\bf 0}=(0, \ldots, 0) \in \Lambda_\Delta$ 
and $-\alpha=(\{1-\alpha_1\},\ldots,\{1-\alpha_{d+1}\}) \in \Lambda_\Delta$ for $\alpha \in \Lambda_\Delta$. 

We collect the notation concerning $\Lambda_\Delta$ which we will use. 
\begin{itemize}
\item For $\alpha \in \Lambda_\Delta$, let $\heit(\alpha)=\sum_{i=1}^{d+1}\alpha_i$. 
\item For $h=0,1,\ldots,d$, let $$\Lambda_\Delta^{(h)}=\{\alpha \in \Lambda_\Delta : \heit(\alpha)=h\}.$$ 
Note that $\Lambda_\Delta = \bigsqcup_{h=0}^d \Lambda_\Delta^{(h)}$. 
The $h^*$-polynomial of $\Delta$ can be computed as follows: 
\begin{align}\label{eq:h^*}h_\Delta^*(t)=\sum_{\alpha \in \Lambda_\Delta}t^{\heit(\alpha)}.\end{align} 
See \cite[Corollary 3.11]{BR15}. 
\item For $\alpha \in \Lambda_\Delta$, let $\supp(\alpha)=\{ i \in [d+1] : \alpha_i > 0\}$ 
and let $|\alpha|=|\supp(\alpha)|$. 
\item For $\Gamma \subset \Lambda_\Delta$, let $\supp(\Gamma)=\bigcup_{\alpha \in \Gamma}\supp(\alpha)$. 
\end{itemize}

We observe that for any $\alpha \in \Lambda_\Delta$, we have \begin{align}\label{obser}|\alpha|=\heit(\alpha)+\heit(-\alpha).\end{align} 
In fact, $$\heit(\alpha)+\heit(-\alpha)=\sum_{i \in \supp(\alpha)}\alpha_i+\sum_{i \in \supp(\alpha)} (1-\alpha_i)=|\alpha|.$$ 

\bigskip

\section{A proof of Theorem \ref{mainthm}}

Fix an integer $k$ with $k \geq 3$. 
In what follows, we consider a lattice simplex $\Delta$ with its $h^*$-polynomial $\sum_{i \geq 0}h_i^*t^i$ 
satisfying $h_{k+1}^*=\cdots=h_{2k}^*=0$. Namely, we have $\Lambda_\Delta^{(k+1)}=\cdots=\Lambda_\Delta^{(2k)}=\emptyset$. 
Our goal is to find a face $\Delta'$ of $\Delta$ such that $h_{\Delta'}^*(t)=\sum_{i=0}^k h_i^*t^i$.

\begin{lem}\label{lem1}
For any $\alpha \in \Lambda_\Delta^{(h)}$ and $0 \leq h \leq k$, we have $|\alpha| \leq k+h$. 
\end{lem}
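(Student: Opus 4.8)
The plan is to reduce the statement to a purely group-theoretic fact about the finite abelian group $\Lambda_\Delta$. By the observation \eqref{obser}, every $\alpha \in \Lambda_\Delta^{(h)}$ satisfies $|\alpha| = \heit(\alpha) + \heit(-\alpha) = h + \heit(-\alpha)$, so the desired inequality $|\alpha| \le k + h$ is equivalent to the single bound $\heit(-\alpha) \le k$. Hence it suffices to prove that the subset $S := \{\beta \in \Lambda_\Delta : \heit(\beta) \le k\}$ is closed under taking inverses, since $\alpha \in S$ whenever $h \le k$.

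The engine of the argument is the subadditivity of height. Because $\{x+y\} \le \{x\} + \{y\}$ for all real $x,y$, and each coordinate of an element of $\Lambda_\Delta$ already lies in $[0,1)$, the group law $(\alpha+\beta)_i = \{\alpha_i+\beta_i\}$ yields $\heit(\alpha+\beta) = \sum_{i=1}^{d+1}\{\alpha_i+\beta_i\} \le \sum_{i=1}^{d+1}(\alpha_i+\beta_i) = \heit(\alpha)+\heit(\beta)$ for all $\alpha,\beta \in \Lambda_\Delta$. In particular, if $\beta,\beta' \in S$ then $\heit(\beta+\beta') \le 2k$. Now the full hypothesis enters: the assumption $\Lambda_\Delta^{(k+1)} = \cdots = \Lambda_\Delta^{(2k)} = \emptyset$ forbids every height in the window $\{k+1,\ldots,2k\}$, so the estimate $\heit(\beta+\beta') \le 2k$ collapses to $\heit(\beta+\beta') \le k$, that is, $\beta+\beta' \in S$. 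Thus $S$ is a finite subset of $\Lambda_\Delta$ containing $\orig$ and closed under addition, and a finite nonempty subset of a group closed under the group operation is automatically a subgroup; in particular $-\alpha \in S$, which is exactly $\heit(-\alpha) \le k$, completing the argument.

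The main obstacle is precisely this closure step. Subadditivity alone gives only $\heit(\beta+\beta') \le 2k$, which does not by itself return $\beta+\beta'$ to $S$; it is essential that the hypothesis annihilates the \emph{entire} band of heights from $k+1$ up to $2k$, not merely the single value $h_{k+1}^*=0$. This is the structural reason the vanishing window has width $k$ rather than $1$: weakening it would allow a sum of two low-height elements to land in the forbidden-but-now-permitted range, letting sums escape $S$ and destroying the subgroup structure. The remaining ingredients — the reduction via \eqref{obser}, the elementary fractional-part estimate, and the passage from a finite closed subset to a subgroup — are all routine.
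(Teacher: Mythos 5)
Your proof is correct, but it is organized differently from the paper's. The paper proves this lemma by contradiction with a discrete intermediate-value argument: assuming $|\alpha| \geq k+h+1$, it deduces $\heit(-\alpha) \geq 2k+1$ from \eqref{obser} and the vanishing band, then walks along the multiples $\alpha, 2\alpha, \ldots, (m-1)\alpha = -\alpha$ (where $m$ is the order of $\alpha$), noting that each step raises the height by at most $h \leq k$; the walk would therefore have to land in the forbidden band $\{k+1,\ldots,2k\}$ on its way up to height $\geq 2k+1$, a contradiction. You instead prove directly that $S=\{\beta \in \Lambda_\Delta : \heit(\beta) \leq k\}$ is closed under addition (subadditivity plus the collapse of the bound $2k$ down to $k$ via the empty band), and then obtain $-\alpha \in S$ from the general fact that a finite nonempty subset of a group closed under the group operation is a subgroup. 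Both arguments run on the same fuel --- subadditivity of $\heit$ and emptiness of the whole band $\{k+1,\dots,2k\}$ --- and both ultimately iterate addition of $\alpha$ inside a finite group, but your organization inverts the paper's logical order: in the paper, the closure-under-addition argument appears only afterwards, in Lemma \ref{lem2}(a), where closure under inverses is deduced \emph{from} Lemma \ref{lem1}; in your version the subgroup property of $S=\Lambda'$ comes first, and Lemma \ref{lem1} (and with it Lemma \ref{lem2}(a)) falls out as a corollary. This buys a small streamlining --- Lemmas \ref{lem1} and \ref{lem2}(a) merge into a single statement --- at the cost of invoking the (standard, but external) finiteness-implies-subgroup fact rather than keeping the argument entirely explicit as the paper does.
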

\begin{proof}
Take $\alpha \in \Lambda_\Delta^{(h)}$ with $0 \leq h \leq k$. Then $\heit(\alpha) = h$. 

Suppose that $|\alpha| \geq k+h+1$. Then it follows from \eqref{obser} that 
$$\heit(-\alpha)=|\alpha|-\heit(\alpha) \geq k+1.$$ Hence $\heit(-\alpha) \geq 2k+1$ by our assumption. 
Let $m$ denote the order of $\alpha$, i.e., $m\alpha={\bf 0}$. Note that $-\alpha=(m-1)\alpha$. 
Take any $1 \leq j \leq m-1$. Then we see that 
$$\heit(j\alpha) = \sum_{i=1}^{d+1} \{ j\alpha_i \} \leq \sum_{i=1}^{d+1} \{ (j-1)\alpha_i \} + \sum_{i=1}^{d+1} \alpha_i 
=\heit((j-1)\alpha)+h.$$ 
On the one hand, $\heit(\alpha)=h \leq k$. On the other hand, $\heit((m-1)\alpha) \geq 2k+1$. 
This means together with $\heit(j\alpha) \leq \heit((j-1)\alpha)+k$ that there is $1 < j' <m-1$ with $k+1 \leq \heit(j'\alpha) \leq 2k$, 
a contradiction. 
\end{proof}

\begin{lem}\label{lem2}
Let $\Lambda'=\{\alpha \in \Lambda_\Delta : \heit(\alpha) \leq k\}$. Then the following statements hold: 
\begin{itemize}
\item[(a)] $\Lambda'$ is a subgroup of $\Lambda_\Delta$; 
\item[(b)] $|\supp(\Lambda')| \leq 4k-1$. 
\end{itemize}
\end{lem}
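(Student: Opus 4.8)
The plan is to handle the two assertions separately, drawing on Lemma \ref{lem1} for part (a) and on an averaging argument for part (b); the hypothesis $\Lambda_\Delta^{(k+1)}=\cdots=\Lambda_\Delta^{(2k)}=\emptyset$ enters crucially in both. For (a) I would verify the three subgroup conditions. Since $\heit({\bf 0})=0\le k$ we have ${\bf 0}\in\Lambda'$. If $\alpha\in\Lambda'$, then combining \eqref{obser} with Lemma \ref{lem1} gives $\heit(-\alpha)=|\alpha|-\heit(\alpha)\le k$, so $-\alpha\in\Lambda'$. For closure under addition, take $\alpha,\beta\in\Lambda'$; from $\{\alpha_i+\beta_i\}\le\alpha_i+\beta_i$ coordinatewise we get $\heit(\alpha+\beta)\le\heit(\alpha)+\heit(\beta)\le 2k$. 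But $\alpha+\beta\in\Lambda_\Delta$ and no element of $\Lambda_\Delta$ has height in $\{k+1,\ldots,2k\}$, so in fact $\heit(\alpha+\beta)\le k$, i.e. $\alpha+\beta\in\Lambda'$.

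The substance of the lemma is (b), and my approach rests on the observation that each coordinate projection is a group homomorphism into the circle group. Fix $i\in[d+1]$ and consider $\pi_i\colon\Lambda'\to\R/\Z$, $\alpha\mapsto\alpha_i\bmod\Z$; because addition in $\Lambda'$ is coordinatewise fractional addition, $\pi_i$ is a homomorphism. Its image $H_i$ is a finite subgroup of $\R/\Z$, hence cyclic of order $m_i:=|H_i|$, consisting of the classes $0,\tfrac{1}{m_i},\ldots,\tfrac{m_i-1}{m_i}$; moreover each class is attained by exactly $|\Lambda'|/m_i$ elements of $\Lambda'$, since the fibers of $\pi_i$ are the cosets of $\ker\pi_i$. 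Writing $S_i:=\sum_{\alpha\in\Lambda'}\alpha_i$, a direct computation then gives
\[
S_i=\frac{|\Lambda'|}{m_i}\sum_{l=0}^{m_i-1}\frac{l}{m_i}=\frac{|\Lambda'|\,(m_i-1)}{2m_i}.
\]
The decisive point is that whenever $i\in\supp(\Lambda')$ the projection $\pi_i$ is nontrivial, so $m_i\ge 2$, which forces $S_i\ge|\Lambda'|/4$.

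To finish I would sum these column totals against the global height budget. Since $S_i=0$ for $i\notin\supp(\Lambda')$, we have $\sum_{i\in\supp(\Lambda')}S_i=\sum_{i=1}^{d+1}S_i=\sum_{\alpha\in\Lambda'}\heit(\alpha)$, and because every nonzero $\alpha\in\Lambda'$ satisfies $\heit(\alpha)\le k$ while $\heit({\bf 0})=0$,
\[
\frac{|\Lambda'|}{4}\,|\supp(\Lambda')|\;\le\;\sum_{i\in\supp(\Lambda')}S_i\;=\;\sum_{\alpha\in\Lambda'}\heit(\alpha)\;\le\;(|\Lambda'|-1)k\;<\;|\Lambda'|\,k,
\]
so dividing by $|\Lambda'|/4$ yields $|\supp(\Lambda')|<4k$, hence $|\supp(\Lambda')|\le 4k-1$. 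I expect the main obstacle to be exactly this passage from the per-element bound of Lemma \ref{lem1} (which gives only $|\alpha|\le 2k$ and is hopelessly weak if the supports are merely unioned) to a bound on the union of all supports: the averaging over the cyclic image $H_i$ is what converts the height budget $\sum_{\alpha}\heit(\alpha)\le(|\Lambda'|-1)k$ into control of $|\supp(\Lambda')|$, and the constant $\tfrac14$ arising from the worst case $m_i=2$ is precisely what produces the stated bound $4k-1$.
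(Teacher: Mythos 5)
Your proof is correct, and part (a) coincides with the paper's argument: subadditivity of $\heit$ combined with the gap $\Lambda_\Delta^{(k+1)}=\cdots=\Lambda_\Delta^{(2k)}=\emptyset$ for closure under addition, and Lemma \ref{lem1} together with \eqref{obser} for inverses. The genuine difference is in (b): the paper does not prove this bound at all, but outsources it by citing Lemma 2.2 and Proposition 2.3 (a) of \cite{Hig18}, which give $|\supp(\Lambda')|\le 2(2s)-1$ with $s=\max\{\heit(\alpha):\alpha\in\Lambda'\}\le k$. You instead give a self-contained averaging argument: each coordinate projection $\pi_i\colon\Lambda'\to\R/\Z$ is a homomorphism with cyclic image of order $m_i$, the fibers are cosets of the kernel, so the column sum $S_i=\sum_{\alpha\in\Lambda'}\alpha_i$ equals $|\Lambda'|(m_i-1)/(2m_i)\ge|\Lambda'|/4$ whenever $i\in\supp(\Lambda')$ (since then $m_i\ge 2$), and comparing against the height budget $\sum_{\alpha\in\Lambda'}\heit(\alpha)\le(|\Lambda'|-1)k<|\Lambda'|k$ forces $|\supp(\Lambda')|<4k$. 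All steps check out, including the subtle points that $\alpha_i\in[0,1)$ is the canonical representative of $\pi_i(\alpha)$ (so the column sum computation is exact) and that the strict inequality comes from the zero element contributing nothing. This is essentially the counting argument underlying the cited reference, so what your route buys is self-containedness, at the cost of roughly a page; note also that running your argument with $s$ in place of $k$ (every nonzero element of $\Lambda'$ has height at most $s$) recovers the sharper bound $4s-1$, i.e.\ the full strength of the result quoted from \cite{Hig18}.
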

\begin{proof}
(a) In general, one sees that $\heit(\alpha+\beta) \leq \heit(\alpha)+\heit(\beta)$ for $\alpha,\beta \in \Lambda_\Delta$. 
Since $\heit(\alpha+\beta) \leq 2k$ for any $\alpha,\beta \in \Lambda'$, one has $\heit(\alpha+\beta) \leq k$. Thus, $\alpha+\beta \in \Lambda'$. 
Moreover, we see from Lemma \ref{lem1} that one has $\heit(-\alpha) = |\alpha|-\heit(\alpha) \leq k$ for $\alpha \in \Lambda'$. 
Hence, $-\alpha \in \Lambda'$. Clearly, ${\bf 0} \in \Lambda'$ since $\heit({\bf 0})=0$. Therefore, $\Lambda'$ is a subgroup of $\Lambda_\Delta$. 

\noindent
(b) Let $s=\max\{\heit(\alpha) : \alpha \in \Lambda'\}$. Then $s \leq k$. 
By \cite[Lemma 2.2 and Proposition 2.3 (a)]{Hig18}, we obtain that $|\supp(\Lambda')| \leq 2(2s)-1 \leq 4k-1$. 
\end{proof}


Now we are ready to give a proof of Theorem \ref{mainthm}. 

\begin{proof}[Proof of Theorem \ref{mainthm}]
Work with the same notation as in Lemma \ref{lem2}. Let $\Delta'=\conv(\{v_i : i \in \supp(\Lambda')\})$.  
Then $\Delta'$ is a face of $\Delta$. Note that $\supp(\Lambda_{\Delta'})=\supp(\Lambda') \subset [d+1]$ by the definition. 
In what follows, we prove that $h_{\Delta'}^*(t)=\sum_{i=0}^k h_i^*t^i$. Our goal is to show that $\Lambda_{\Delta'}=\Lambda'$. 
Clearly, we have $\Lambda' \subset \Lambda_{\Delta'}$, where we regard $\Lambda_{\Delta'}$ as a subgroup of $\Lambda_\Delta$. 
We prove that for any $\alpha \in \Lambda_\Delta \setminus \Lambda'$, we have $\alpha \not\in \Lambda_{\Delta'}$. 

Let $\alpha \in \Lambda_\Delta \setminus \Lambda'$. Then $\heit(\alpha) \geq 2k+1$. 
If $\heit(-\alpha) \leq k$, then $\heit(\alpha) \leq k$ by Lemma \ref{lem2} (a), a contradiction. Thus, $\heit(-\alpha) \geq 2k+1$. 
From \eqref{obser}, we see that 
\begin{align*}
2k+1 \leq \heit(-\alpha)=|\alpha|-\heit(\alpha) \leq |\alpha| - 2k-1, 
\end{align*}
i.e., $|\alpha| \geq 4k+2$. Now, Lemma \ref{lem2} (b) says that $|\gamma| \leq 4k-1$ for any $\gamma \in \Lambda'$. 
Since $|\alpha| \geq 4k+2>4k-1$, one sees that $\supp(\alpha) \not\subset \supp(\Lambda')$. 
Since $\supp(\Lambda') = \supp(\Lambda_{\Delta'})$, we conclude that $\alpha \not\in \Lambda_{\Delta'}$, as required. 
\end{proof}


\bigskip

\section{Examples showing that $h_{k+1}^*=h_{k+2}^*=\cdots=h_{2k-1}^*=0$ is necessary}

We will provide the example which shows that Theorem \ref{mainthm} is not true 
if we drop the assumption $h_j^*=0$ for each $k+1 \leq j \leq 2k-1$ (Proposition \ref{prop:main}). 

We recall a useful construction of lattice polytopes. Let $P \subset \R^d$ and $Q \subset \R^e$ be lattice polytopes. We define the \textit{join} of $P$ and $Q$: 
$$P \star Q:=\conv(\{(0,{\bf x}, {\bf 0}_e) : {\bf x} \in P\} \cup \{(1, {\bf 0}_d, {\bf y}) : {\bf y} \in Q\}) \subset \R^{d+e+1},$$ 
where ${\bf 0}_d$ (resp. ${\bf 0}_e$) denotes the origin of $\R^d$ (resp. $\R^e$). 
Note that $P \star Q$ is a simplex if and only if so are $P$ and $Q$. 
It is known from \cite[Lemma 1.3]{HT09} that $$h_{P \star Q}^*(t)=h_P^*(t)h_Q^*(t).$$ 

\begin{lem}\label{construct}
For any positive integers $a,b,k,\ell$, there exists a lattice simplex $\Delta$ 
with its $h^*$-polynomial $1+at^k+bt^\ell+(a+b)t^{k+\ell}$. 
\end{lem}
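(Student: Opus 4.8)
The plan is to realize the target polynomial as the $h^*$-polynomial of a single, explicitly built lattice simplex, rather than as a join. This detour is forced: if we join two simplices with $h^*$-polynomials $1+at^k$ and $1+bt^\ell$, the multiplicativity $h^*_{P\star Q}(t)=h^*_P(t)h^*_Q(t)$ yields $1+at^k+bt^\ell+ab\,t^{k+\ell}$, whose top coefficient is the \emph{product} $ab$. More generally, the support $\{0,k,\ell,k+\ell\}$ with top degree $k+\ell$ equal to the sum of the top degrees of the factors forces any nontrivial factorization to be a product of two binomials, i.e.\ a ``rectangular'' coefficient pattern; the sum $a+b$ therefore cannot be produced by a join, and $\Lambda_\Delta$ must be an indecomposable group with overlapping supports.

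First I would record the shape of the group via \eqref{eq:h^*}, which identifies the coefficient of $t^h$ in $h^*_\Delta(t)$ with the number of elements of $\Lambda_\Delta$ of height $h$. So I seek a finite abelian group of order $1+2a+2b$ carrying exactly one element of height $0$, $a$ of height $k$, $b$ of height $\ell$, and $a+b$ of height $k+\ell$. Using \eqref{obser}, the involution $\alpha\mapsto-\alpha$ pairs the $2a+2b$ nonzero elements with $\heit(\alpha)+\heit(-\alpha)=|\alpha|$, and I would arrange these into $a$ pairs of heights $\{k,k+\ell\}$ (support $2k+\ell$) together with $b$ pairs of heights $\{\ell,k+\ell\}$ (support $k+2\ell$); one checks directly that this pairing reproduces precisely the multiplicities $a$, $b$, $a+b$ at heights $k$, $\ell$, $k+\ell$, and nothing else.

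Next I would exhibit concrete coordinates realizing this profile. Concretely, I would look for a rational generator $\alpha\in[0,1)^n$ (equivalently an integer weight vector $(q_1,\dots,q_n)$ together with a modulus $N=1+2a+2b$, so that $\heit(j\alpha)=\sum_i\{jq_i/N\}$) whose successive multiples, reduced coordinatewise mod $1$, have exactly the heights listed above; here the supports of the two families of pairs must share coordinates so that $\Lambda_\Delta$ is a single group and the $h^*$-polynomial is the sum rather than a product. I would then verify $\sum_i\alpha_i\in\Z$ for every element, so that the set is genuinely a subgroup in the sense of Section~2, and pass back to an honest lattice simplex $\Delta$ by the correspondence that realizes such a subgroup of $[0,1)^n$ as some $\Lambda_\Delta$, after which \eqref{eq:h^*} gives $h^*_\Delta(t)=1+at^k+bt^\ell+(a+b)t^{k+\ell}$ immediately. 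The hard part will be exactly this middle step: choosing the weights so that the three nonzero heights $k,\ell,k+\ell$ occur with the prescribed multiplicities and \emph{no} spurious intermediate heights appear, while keeping the two support sizes $2k+\ell$ and $k+2\ell$ simultaneously consistent inside one group of the fixed order $1+2a+2b$; this reconciliation is the delicate obstacle, since it is precisely what cannot be achieved by a product/join decomposition.
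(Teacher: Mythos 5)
Your proposal does not prove the lemma: everything hinges on the ``middle step'' that you explicitly defer --- exhibiting a finite abelian group of order $1+2a+2b$ (equivalently, weights $(q_1,\dots,q_n)$ and a modulus) whose elements have heights $k,\ell,k+\ell$ with multiplicities exactly $a,b,a+b$ and no spurious heights, and which arises as $\Lambda_\Delta$ for a lattice simplex $\Delta$. The negation-pairing bookkeeping via \eqref{obser} is only a necessary consistency check, not a construction, and you give no candidate weights, no argument that the two overlapping-support families can coexist in one group, and no verification that intermediate multiples avoid heights strictly between the target values. So as written the proposal establishes nothing beyond the (correct) formula \eqref{eq:h^*}. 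It also silently assumes the group is cyclic with a single generator, which is an additional unjustified restriction.

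That said, you have put your finger on a real discrepancy in the paper. The paper's own proof is one line: take the join $\Delta_{a,k}\star\Delta_{b,\ell}$ of simplices with $h^*$-polynomials $1+at^k$ and $1+bt^\ell$ and invoke $h^*_{P\star Q}(t)=h^*_P(t)h^*_Q(t)$ --- which yields top coefficient $ab$, \emph{not} the stated $a+b$. You are right that the join gives the product, and the stated lemma therefore does not match its proof; but the correct conclusion is that the coefficient $(a+b)$ in the statement is a slip for $ab$, not that one must abandon the join. Indeed, in the only application, Proposition \ref{prop:main}, the top coefficient of the constructed $h^*$-polynomial is irrelevant: the argument uses only the truncation $(h_0^*,\dots,h_k^*)$ together with $h_j^*\neq 0$, and both hold equally with $ab$ in place of $a+b$ (e.g.\ $p-2$ instead of $p-1$, still nonzero). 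So the economical repair is to correct the statement and keep the paper's proof, rather than to attempt the delicate direct construction you outline --- whose truth for the sum version you have not established. Two smaller points: your claim that a join can \emph{never} produce the sum fails when $a=b=2$ (there $ab=a+b$), and your ``forced rectangular factorization'' argument is not airtight in general (e.g.\ when $\ell=2k$ a trinomial factor $1+dt^k+et^{2k}$ is a priori possible and must be ruled out by inspecting coefficients, not by support alone).
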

\begin{proof}
For positive integers $c$ and $m$, let $\Delta_{c,m}$ be a lattice simplex with its $h^*$-polynomial $1+ct^m$. 
Such a simplex is given in e.g., \cite[Example 3.22]{BR15}. 
A required simplex can be obtained by $\Delta_{a,k} \star \Delta_{b,\ell}$. 
\end{proof}

\begin{lem}\label{HHH}
For any prime number $p$ with $p \geq 5$ and integers $i,j$ with $2 \leq i < j$, 
there exists no lattice polytope whose $h^*$-polynomial is $1+t^i+(p-2)t^j$. 
\end{lem}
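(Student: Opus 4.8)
The plan is to derive a contradiction from the assumption that some lattice polytope $P$ has $h^*$-polynomial $1+t^i+(p-2)t^j$, where $p \geq 5$ is prime and $2 \leq i < j$. The first thing I would extract is the arithmetic data encoded in this $h^*$-vector. The normalized volume is the sum of the coefficients, namely $1+1+(p-2)=p$; since the volume is prime, this strongly constrains the combinatorial structure. In particular, a lattice polytope whose normalized volume equals a prime $p$ and which is not a unimodular simplex must be quite special, because $p$ admits no nontrivial factorization. I expect this primality to force $P$ (or a suitable simplex associated to $P$) to be a simplex whose group $\Lambda_\Delta$ is cyclic of prime order $p$, since $|\Lambda_\Delta| = \sum_i h_i^*$ equals the normalized volume for a simplex.

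Next I would try to reduce from a general lattice polytope to a lattice \emph{simplex}, so that the group-theoretic machinery of Section~2 applies. A natural approach is to use the fact (implicit in the degree/codegree theory cited in the introduction) that one may pass to a simplex, or to invoke known structural results about polytopes of small normalized volume. Once working with a simplex $\Delta$ whose $h^*$-vector has these coefficients, $\Lambda_\Delta$ is a finite abelian group of order $p$, hence cyclic: $\Lambda_\Delta \cong \Z/p\Z$, generated by some $\alpha$. The height function $\heit$ then distributes the $p$ elements $\{0,\alpha,2\alpha,\ldots,(p-1)\alpha\}$ among the height levels, with exactly one element at height $0$ (namely $\orig$), one element at height $i$, and $p-2$ elements at height $j$.

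The key step is then to exploit the additive/multiplicative structure of the heights of $\ell\alpha$ for $\ell=1,\ldots,p-1$. Using the relation $\heit(-\alpha)=|\alpha|-\heit(\alpha)$ from \eqref{obser} together with the subadditivity $\heit(\beta+\gamma)\leq \heit(\beta)+\heit(\gamma)$, I would analyze the sequence of heights $\heit(\ell\alpha)$. Since there is a \emph{unique} nonzero element of height $i$, it must be either $\alpha$ or $-\alpha=(p-1)\alpha$, and correspondingly $\heit(-\alpha)$ (or $\heit(\alpha)$) lands at height $j$ among the $(p-2)$ elements. Pairing $\ell\alpha$ with $(p-\ell)\alpha=-(\ell\alpha)$ and summing heights, I would compute $\sum_{\ell=1}^{p-1}\heit(\ell\alpha)$ in two ways: on one hand as $i + (p-2)j$ from the $h^*$-vector, and on the other via the pairing $\heit(\ell\alpha)+\heit(-\ell\alpha)=|\ell\alpha|$, which forces strong divisibility and parity constraints. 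The arithmetic of the height sums, combined with the uniqueness of the height-$i$ element, should produce a congruence modulo $p$ that is violated precisely when $p\geq 5$ is prime.

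The hard part will be controlling the heights $\heit(\ell\alpha)$ for intermediate $\ell$: the subadditivity only gives inequalities, and the $h^*$-vector only records how many elements sit at heights $i$ and $j$, not which multiple of $\alpha$ lands where. I anticipate the crux is showing that the single height-$i$ element being $\pm\alpha$ pins down enough of the sequence $\heit(\ell\alpha)$ to evaluate $\sum_\ell \heit(\ell\alpha)$ exactly, and then checking that the resulting equation $i+(p-2)j = \tfrac{1}{2}\sum_\ell |\ell\alpha|$ has no solution with $p$ prime and $p\geq 5$. The role of the hypotheses $p\geq 5$ and $i<j$ is presumably to rule out the small sporadic solutions (such as those permitted for $p=2,3$, which correspond to degenerate or low-volume cases), so I would expect the final contradiction to hinge on a clean number-theoretic identity rather than further geometric input.
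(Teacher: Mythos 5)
Your strategy (reduce to a simplex, identify $\Lambda_\Delta\cong\Z/p\Z$, pair $\ell\alpha$ with $-\ell\alpha$) points in the right direction, but two steps have genuine gaps. First, the reduction to a simplex: primality of the normalized volume does \emph{not} force $P$ to be a simplex (there are lattice quadrilaterals of normalized volume $5$), and no structural result about small volume is needed. The correct reason --- and the one the paper uses --- is that $i\geq 2$ makes the linear coefficient vanish, so $|P\cap\Z^d| = h_1^*+d+1 = d+1$; since every vertex of a lattice polytope is a lattice point and a $d$-polytope has at least $d+1$ vertices, $P$ must be a simplex. This is precisely where the hypothesis $2\leq i$ enters, and your proposal never uses it for this purpose.

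Second, and more seriously, your contradiction never materializes: you reduce everything to the identity $i+(p-2)j=\tfrac12\sum_\ell|\ell\alpha|$ and then hope that ``divisibility and parity constraints'' finish the job, but you have no handle on the support sizes $|\ell\alpha|$ --- you yourself flag this as the hard part. The missing idea is the crux of the whole lemma: because $p$ is prime, \emph{every nonzero element of $\Lambda_\Delta$ has the same support} (if $\alpha_m=a_m/p\neq 0$, then $\{\ell a_m/p\}\neq 0$ for all $1\leq\ell\leq p-1$ since $p\nmid \ell a_m$). Hence $|\ell\alpha|=|\alpha|$ for all $\ell$; moreover, taking the generator $\alpha$ to be the unique height-$i$ element (any nonzero element generates $\Z/p\Z$), oddness of $p$ gives $-\alpha\neq\alpha$, so $\heit(-\alpha)=j$ and $|\alpha|=\heit(\alpha)+\heit(-\alpha)=i+j$ by \eqref{obser}. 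Your double count then reads $i+(p-2)j=\tfrac{(p-1)(i+j)}{2}$, i.e.\ $(p-3)(j-i)=0$, the desired contradiction since $j>i$ and $p\geq 5$; equivalently, the pairing $\beta\leftrightarrow-\beta$ matches height-$i$ elements bijectively with height-$j$ elements, giving $1=p-2$. With these two repairs your argument is a complete, self-contained proof, and it is worth noting how it relates to the paper's: the paper disposes of the lemma in two lines by citing \cite[Theorem 1.1 (a)]{Hig14}, the shifted symmetry of $h^*$-vectors of simplices with prime normalized volume, which $1+t^i+(p-2)t^j$ violates for $p\geq 5$. Your pairing argument, once fixed, reproves exactly the case of that theorem being invoked, which is arguably more illuminating --- it also makes visible why $p=3$ (e.g.\ $1+t^k+t^{2k}$, which is realizable by Remark \ref{chuui}) and $i=j$ must be excluded --- but as submitted it is an outline whose decisive step is missing.
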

\begin{proof}
Suppose that there is a lattice polytope $P$ with $h^*_P(t)=1+t^i+(p-2)t^j$. 
Since the linear term of $h_P^*(t)$ vanishes, $P$ must be a simplex. 
Now, \cite[Theorem 1.1 (a)]{Hig14} says that the $h^*$-polynomial with a prime normalized volume should satisfy 
$h_{i+1}^*=h_{s-i}^*$ for any $i=0,1,\ldots,s$, where $s$ is the degree of lattice polytope. 
However, since $p \geq 5$, this never happens, a contradiction. 
\end{proof}

\begin{prop}\label{prop:main}
For any $k \geq 3$ and $k+1 \leq j \leq 2k-1$, there exists a lattice simplex $\Delta$ with $h^*_\Delta(t)=\sum_{i \geq 0}h_i^*t^i$ 
such that \begin{itemize}
\item $h_{k+1}^*=\cdots=h_{j-1}^*=h_{j+1}^*=\cdots=h_{2k-1}^*=0$, 
\item $h_j^* \neq 0$ and 
\item $(h_0^*,\ldots,h_k^*)$ does not satisfy degree $k$ condition. 
\end{itemize}
\end{prop}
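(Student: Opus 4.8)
The plan is to produce every required $h^*$-polynomial as that of a join of two of the simplices $\Delta_{c,m}$ supplied by Lemma~\ref{construct}, and to defeat the degree $k$ condition for the truncation $(h_0^*,\dots,h_k^*)$ by invoking Lemma~\ref{HHH}. Concretely, fix a prime $p\ge 5$ and exponents $2\le m<n\le k$. By Lemma~\ref{construct} (with $a=1$, $b=p-2$ and exponents $m,n$) there is a lattice simplex $\Delta$ with $h^*_\Delta(t)=1+t^m+(p-2)t^n+(p-1)t^{m+n}$. The only exponents carrying nonzero coefficients are $0,m,n$ and $m+n$; since $m,n\le k$, the unique term landing in the window $[k+1,2k-1]$ sits at $j:=m+n$, where $h_j^*=p-1\neq 0$, while $h_i^*=0$ for every other $i$ in that window. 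Because $m+n>k$, the truncation $(h_0^*,\dots,h_k^*)$ equals $1+t^m+(p-2)t^n$, which is exactly the shape forbidden by Lemma~\ref{HHH} (taking its ``$i$'' to be $m$ and its ``$j$'' to be $n$, with $2\le m<n$). Hence this truncation is not the $h^*$-polynomial of any lattice polytope, so in particular it fails the degree $k$ condition.

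It then remains to hit every target $j\in[k+1,2k-1]$ by a legal pair $2\le m<n\le k$ with $m+n=j$. Taking $n=k$ and $m=j-k$ works whenever $j\ge k+2$, since then $2\le m$ and $m<k=n$ follows from $j\le 2k-1$; and taking $m=2$, $n=k-1$ handles the remaining value $j=k+1$ as soon as $k\ge 4$, where $m<n$ amounts to $2<k-1$. This disposes of all pairs $(k,j)$ with $k\ge 4$, together with the pair $(k,j)=(3,5)$.

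The single leftover instance is $(k,j)=(3,4)$, for which no pair $2\le m<n\le 3$ sums to $4$, so Lemma~\ref{HHH} is unavailable. Here I would instead take $\Delta=\Delta_{1,1}\star\Delta_{1,3}$, whose $h^*$-polynomial is $(1+t)(1+t^3)=1+t+t^3+t^4$; thus $h_4^*=1\neq 0$, $h_5^*=0$, and the truncation is the degree-$3$ sequence $(1,1,0,1)$. I would rule this sequence out by the standard monotonicity (Stanley-type) inequality for $h^*$-vectors of degree $s$, which in partial-sum form gives $h_0^*+h_1^*\le h_2^*+h_3^*$ when $s=3$; this reads $2\le 1$ and fails, so $(1,1,0,1)$ satisfies no degree $3$ condition.

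The main obstacle is precisely this boundary phenomenon. Forcing the lone nonzero middle coefficient to occur at $j=m+n$ with both exponents at least $2$ (as Lemma~\ref{HHH} demands, so that the realizing polytope is a simplex and the prime-volume symmetry applies) pushes $j$ up to at least $5$; consequently the two-fold join together with Lemma~\ref{HHH} cannot by itself reach $j=4$. The argument therefore necessarily splits into the uniform family above and a single dedicated example for $(k,j)=(3,4)$, whose truncation must be excluded by a partial-sum inequality rather than by the prime normalized volume argument underlying Lemma~\ref{HHH}.
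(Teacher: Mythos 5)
Your argument coincides with the paper's proof in every case except $(k,j)=(3,4)$: the choices $m=j-k,\ n=k$ for $j\ge k+2$, and $m=2,\ n=k-1$ for $j=k+1$ with $k\ge 4$, followed by Lemma~\ref{HHH} applied to the truncation $1+t^m+(p-2)t^n$, are exactly the paper's. Where you genuinely diverge is the leftover case $(k,j)=(3,4)$. The paper stays inside its own toolkit: it exhibits the explicit simplex $\conv(\{\orig,\ve_1,\ldots,\ve_4,\ve_1+4\ve_2+7\ve_3+8\ve_4+9\ve_5\})\subset\R^5$, whose $h^*$-polynomial is $1+2t^2+4t^3+2t^4$, and rules out the truncation $(1,0,2,4)$ by the same prime-normalized-volume/shifted-symmetry argument that proves Lemma~\ref{HHH} (the volume $1+2+4=7$ is prime and $h_1^*\neq h_3^*$; note that $(1,0,2,4)$ is not literally of the form $1+t^i+(p-2)t^j$, so the paper is really reusing the proof of that lemma rather than its statement). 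You instead take $\Delta_{1,1}\star\Delta_{1,3}$, with $h^*$-polynomial $1+t+t^3+t^4$, and exclude the truncation $(1,1,0,1)$ via Stanley's partial-sum inequality $h_0^*+h_1^*+\cdots+h_i^*\le h_s^*+h_{s-1}^*+\cdots+h_{s-i}^*$ for $0\le i\le \lfloor s/2\rfloor$, where $s$ is the degree. This is correct: the inequality holds for all lattice polytopes (Stanley, \textit{J. Pure Appl. Algebra} \textbf{73} (1991)), any polytope realizing $1+t+t^3$ has degree exactly $3$, and $i=1$ gives $2\le 1$, a contradiction. Your route buys uniformity (the exceptional case uses the same join machinery, and the excluding tool is classical), at the cost of importing a result the paper never states; the paper's route is self-contained in its Lemma~\ref{HHH} argument but needs an ad hoc simplex whose $h^*$-vector must be verified by separate computation.

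Two small caveats. First, the top coefficient: the join actually gives $(1+at^k)(1+bt^\ell)=1+at^k+bt^\ell+ab\,t^{k+\ell}$, so the coefficient of $t^{m+n}$ in your first family is $p-2$, not $p-1$; this discrepancy originates in the statement of Lemma~\ref{construct} itself (a typo in the paper, whose proof is the join) and is immaterial, since only the nonvanishing of $h_j^*$ is used --- and your own computation $(1+t)(1+t^3)=1+t+t^3+t^4$ uses the correct product formula. Second, your closing claim that the $(3,4)$ truncation \emph{must} be excluded by a partial-sum inequality rather than by the prime-volume argument is false as stated: the paper's truncation $(1,0,2,4)$ has prime volume $7$ and is excluded precisely by that argument. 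What is true is only that the literal statement of Lemma~\ref{HHH}, fed by a two-fold join with both exponents at least $2$, cannot reach $j=4$. Neither caveat affects the validity of your proof.
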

\begin{proof}
When $j \geq k+2$, let $a=j-k$ and $b=k$. Note that $2 \leq a \leq k-1$. 
Then Lemma \ref{construct} says that there exists a lattice simplex $\Delta$ with its $h^*$-polynomial 
$1+t^a+(p-2)t^b+(p-1)t^{a+b}=1+t^{j-k}+(p-2)t^k+(p-1)t^j$, where we let $p$ be a prime number with $p \geq 5$. 
Therefore, by Lemma \ref{HHH}, this $h^*$-polynomial enjoys the required properties. 

In the case $j=k+1$ and $k \geq 4$, there exists a lattice simplex $\Delta$ with its $h^*$-polynomial 
$1+t^2+(p-2)t^{k-1}+(p-1)t^{k+1}$ by Lemma \ref{construct}, where we let $p$ be a prime number with $p \geq 5$, 
and this $h^*$-polynomial enjoys the required properties by Lemma \ref{HHH}. 

In the case $j=k+1$ and $k=3$, consider the lattice simplex 
$$\mathrm{conv}({\bf 0},{\bf e}_1,\ldots,{\bf e}_4,{\bf e}_1+4{\bf e}_2+7{\bf e}_3+8{\bf e}_4+9{\bf e}_5\}) \subset \R^5.$$ 
Then the $h^*$-polynomial of this simplex is equal to $1+2t^2+4t^3+2t^4$. 
Since $(1,0,2,4)$ does not satisfy degree $3$ condition by Lemma \ref{HHH}, we conclude that this $h^*$-polynomial enjoys the required properties. 
\end{proof}

Proposition \ref{prop:main} says that we cannot drop the assumption $h_j^*=0$ in Corollary \ref{maincor} (Theorem \ref{mainthm}, too) for any $k+1 \leq j \leq 2k-1$. 
\begin{rem}\label{chuui}
There exists a lattice simplex $\Delta$ of dimension $3k-1$ such that $h_\Delta^*(t)=1+t^k+t^{2k}$ for any $k \geq 2$. 
In fact, we may set $\Delta=\mathrm{conv}(\{{\bf 0}, \eb_1,\ldots,\eb_{d-1},2(\eb_1+\cdots+\eb_{d-1})+3\eb_d)\})$, 
where $d=3k-1$ and $\eb_1,\ldots,\eb_d$ denote the unit vectors of $\R^d$. 
Moreover, we see from \cite[Section 4.2]{HHL12} that this is the only lattice simplex of dimension $3k-1$ up to unimodular equivalence 
whose $h^*$-polynomial is $1+t^k+t^{2k}$. On the other hand, we see that this $h^*$-vector is {\bf shifted symmetric} (see \cite{Hig10}), 
i.e., $h_{i+1}^*=h_{d-i}^*$ for each $0 \leq i \leq d-1$. 
Thus, the $h^*$-polynomial of every face of $\Delta$ is $1$ by \cite[Theorem 2.1]{Hig10}. 

Therefore, Theorem \ref{mainthm} is not true under the assumption $h_{k+1}^*=\cdots=h_{2k-1}^*=0$, i.e., the assumption $h_{2k}^*=0$ is necessary. 

However, Corollary \ref{maincor} might be still true even if $h_{2k}^* \neq 0$. In fact, $1+t^k$ is a possible $h^*$-polynomial of a lattice simplex $\Delta_{1,k}$. 
\end{rem}

\bigskip

\section{Future Questions}

Finally, we suggest two questions: 
\begin{quest}\label{q1}
Let $P$ be a lattice {\bf polytope} with $h_P^*(t)=\sum_{i \geq 0}h_i^*t^i$. 
Assume that $h_{k+1}^*=h_{k+2}^*=\cdots=h_{2k}^*=0$. Then does $(h_0^*,h_1^*,\ldots,h_k^*)$ satisfy degree $k$ condition? 
\end{quest}

\begin{quest}\label{q2}
Let $\Delta$ be a lattice simplex with $h_\Delta^*(t)=\sum_{i \geq 0}h_i^*t^i$. 
Assume that $h_{k+1}^*=h_{k+2}^*=\cdots=h_{2k-1}^*=0$. Then does $(h_0^*,h_1^*,\ldots,h_k^*)$ satisfy degree $k$ condition? 
Is this also true for polytopes?
\end{quest}

\bigskip

\end{document}